\newcommand{\keywordsname}{Key words}
\newcommand{\keywords}[1]{%
\def\thekeywords{#1}%
\begin{@bstr@ctlist}
\hspace*{\abstitleskip}{\abstractnamefont\keywordsname\@bslabeldelim}\abstracttextfont\
#1%
\par\end{@bstr@ctlist}
}
\newcommand{\subjclassname}{Mathematics subject classification}
\newcommand{\subjclass}[2][2020]{%
\begin{@bstr@ctlist}
\hspace*{\abstitleskip}{\abstractnamefont\subjclassname\ (#1)\@bslabeldelim}\abstracttextfont\
#2%
\par\end{@bstr@ctlist}
}
\def\and{
	\end{tabular}%
	and%
	\begin{tabular}[t]{c}}%
\def\thanks#1{
\protected@xdef\@thanks{\@thanks
\protect\footnotetext[\the\c@footnote]{#1}}%
}
\let\addresses\@empty      
\newcommand{\address}[2][]{\g@addto@macro\addresses{\address{#1}{#2}}}
\newcommand{\curraddr}[2][]{\g@addto@macro\addresses{\curraddr{#1}{#2}}}
\newcommand{\email}[2][]{\g@addto@macro\addresses{\email{#1}{#2}}}
\newcommand{\urladdr}[2][]{\g@addto@macro\addresses{\urladdr{#1}{#2}}}
\def\enddoc@text{
  \ifx\@empty\addresses \else\@setaddresses\fi}
\def\emailaddrname{E-mail address}
\def\@setaddresses{\par
  \nobreak \begingroup
%
%
  \interlinepenalty\@M
  \def\address##1##2{\begingroup%
    \par\addvspace\bigskipamount
    \@ifnotempty{##1}{(\ignorespaces##1\unskip) }%
    {\noindent\ignorespaces##2}\par\endgroup}%
%
%
  \def\email##1##2{\begingroup
    \@ifnotempty{##2}{\nobreak\noindent\emailaddrname
      \@ifnotempty{##1}{, \ignorespaces##1\unskip}\/:\space
      \ttfamily##2\par}\endgroup}%
%
%
  \addresses
  \endgroup
}
\def\cstar#1{\expandafter\@cstar\csname c@#1\endcsname}
\def\@cstar#1{\ifcase#1\or $\ast$\or $\ast\ast$\or $\ast\ast\ast$\fi}
\AddEnumerateCounter{\cstar}{\@cstar}{$\ast\ast\ast$}
\newlist{conditions}{enumerate}{1}
\newlist{iconditions}{enumerate}{1}
\newlist{inthm}{enumerate}{1}
\setlist[conditions]{label=\normalfont(\alph*),ref=\normalfont(\alph*)}
\setlist[iconditions]{label=\normalfont(\roman*),ref=\normalfont(\roman*)}
\setlist[inthm]{label=\normalfont(\thetheorem.\arabic*),ref=\normalfont(\thetheorem.\arabic*),leftmargin=*}
\newcommand{\PB}{\mathbb{P}}
\newcommand{\R}{\mathbb{R}}
\newcommand{\SB}{\mathbb{S}}
\newcommand{\Cinfty}{\mathcal{C}^{\infty}}
\newcommand{\GL}{\mathrm{GL}}
\newcommand{\ON}{\mathrm{O}}
\newtheorem{theorem}{Theorem}[section]
\newtheorem{proposition}[theorem]{Proposition}
\newtheorem{lemma}[theorem]{Lemma}
\theoremstyle{definition}
\newtheorem*{acknowledgements}{Acknowledgements}
\newtheorem{definition}[theorem]{Definition}
\newtheorem{example}[theorem]{Example}
\DeclarePairedDelimiter\abs{\lvert}{\rvert}%
\DeclarePairedDelimiter\norm{\lVert}{\rVert}%
\let\oldabs\abs
\def\abs{\@ifstar{\oldabs}{\oldabs*}}
\let\oldnorm\norm
\def\norm{\@ifstar{\oldnorm}{\oldnorm*}}
\title{Dominating real algebraic morphisms}
\date{}
\author{Wojciech Kucharz}
\address{Wojciech Kucharz\\Institute of Mathematics\\Faculty of Mathematics and Computer
Science\\Jagiellonian University\\\L{}ojasiewicza 6\\30-348
Krak\'ow\\Poland}
\email{Wojciech.Kucharz@im.uj.edu.pl}
\begin{document}
\maketitle
\thispagestyle{empty}

\begin{abstract}
Let $X$ and $Y$ be nonsingular real algebraic varieties, $\dim X \geq \dim Y$. Assume that the variety $Y$ is malleable, compact and connected. Our main result implies that each regular map from $X$ to $Y$ is homotopic to a surjective regular map. The class of malleable varieties includes all homogeneous spaces for linear real algebraic groups.
\end{abstract}

\keywords{Real algebraic variety, regular map, dominating map, dominating spray.}
\hypersetup{pdfkeywords={\thekeywords}}
\subjclass{14P05, 14P99.}

\section{Introduction}\label{sec:1}
Throughout this note we use the term \emph{real algebraic variety} to mean a ringed space with structure sheaf of $\R$-algebras of $\R$-valued functions, which is isomorphic to a Zariski locally closed subset of real projective $n$-space $\PB^n(\R)$, for some $n$, endowed with the Zariski topology and the sheaf of regular functions. This is compatible with \cite{bib1, bib8}, which contain a detailed exposition of real algebraic geometry. Recall that each real algebraic variety in the sense used here is actually \emph{affine}, that is, isomorphic to an algebraic subset of $\R^n$ for some~$n$, see \cite[Proposition~3.2.10 and Theorem~3.4.4]{bib1}. Morphisms of real algebraic varieties are called \emph{regular maps}. Each real algebraic variety is also equipped with the Euclidean topology determined by the usual metric on $\R$. Unless explicitly stated otherwise, all topological notions relating to real algebraic varieties refer to the Euclidean topology.

\begin{definition}\label{def-1-1}
Let $Y$ be a nonsingular real algebraic variety.
\begin{iconditions}
\item\label{def-1-1-i} A \emph{dominating map} for $Y$ at a point $y_0 \in Y$ is a regular map $f \colon \R^n \to Y$, for some nonnegative integer $n$, such that $f(0) = y_0$ and the derivative $d_0 f \colon T_0 \R^n \to T_{y_0} Y$ is surjective; if such a map $f$ exists, then $Y$ is said to be \emph{dominable at $y_0$}.

\item\label{def-1-1-ii} A \emph{dominating spray} for $Y$ is a regular map $s \colon Y \times \R^n \to Y$, for some nonnegative integer $n$, such that for every point $y \in Y$ the equality $s(y,0) = y$ holds, and the map $s(y, \cdot) \colon \R^n \to Y$, $v \mapsto s(y,v)$ is dominating for $Y$ at $y$.

\item\label{def-1-1-iii} The variety $Y$ is called \emph{malleable} if it admits a dominating spray.
\end{iconditions}
\end{definition}

Obviously, if $Y$ is dominable at $y_0\in Y$, then there exists a dominating map $\R^n \to Y$ for $Y$ at $y_0$, where $n$ is the dimension of the irreducible component of $Y$ containing $y_0$. As noted in Proposition~\ref{prop-1-4}, Definition~\ref{def-1-1}\ref{def-1-1-i} is closely related to the classical notion of unirational variety.

Every malleable variety is dominable at each of its points, but it is not known whether the converse is also true.

The notion of dominating spray introduced in \cite[Definition~2.1(ii)]{bib2} is more general than that in Definition~\ref{def-1-1}\ref{def-1-1-ii}. However, according to \cite[Lemma~2.2]{bib2}, the notion of malleable variety in \cite[Definition~2.1(iii)]{bib2} is identical with that in Definition~\ref{def-1-1}\ref{def-1-1-iii}. A large class of malleable real algebraic varieties was identified in \cite[Propositions 2.7~and~2.8]{bib2}, see also Example~\ref{ex-1-3} for a summary of these results. Moreover, \cite[Theorem~4.2]{bib2} on maps with values in malleable varieties plays a key role in all the main results of \cite{bib2}.

The aim of this note is to prove the following dominability result.

\begin{theorem}\label{th-1-2}
Let $Y$ be a nonsingular real algebraic variety. Assume that $Y$ is malleable, compact and connected. Then, for every nonsingular real algebraic variety $X$, with $\dim X \geq \dim Y$, and every regular map $f \colon X \to Y$, there exists a surjective regular map $g \colon X \to Y$ having the following two properties:
\begin{inthm}
\item\label{th-1-2-1} there exists a regular map $F \colon X \times \R \to Y$ such that $F(x,0)=f(x)$ and $F(x,1)=g(x)$ for all $x \in X$ (in particular, $f$ is homotopic to $g$), and

\item\label{th-1-2-2} for every point $y \in Y$ there exists a point $x \in X$ such that $g(x)=y$ and the derivative $d_x g \colon T_x X \to T_y Y$ is surjective.
\end{inthm}
\end{theorem}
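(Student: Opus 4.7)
My plan has two phases: first, upgrade the given spray, using compactness and connectedness, to one whose base-fiber restrictions are globally surjective onto $Y$; second, combine this upgraded spray with $f$ by a prescribed-jet interpolation on $X$ to produce $g$ and the homotopy $F$.

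\emph{Phase 1 --- a globally surjective spray.} Starting from $s\colon Y\times\R^n\to Y$, I iterate to form $s^{(k)}\colon Y\times(\R^n)^k\to Y$ by $s^{(k)}(y,v_1,\ldots,v_k)=s(s^{(k-1)}(y,v_1,\ldots,v_{k-1}),v_k)$, and consider $\Psi_k(y,v):=(y,s^{(k)}(y,v))$ as a regular map to $Y\times Y$. The surjectivity of $d_0 s(y,\cdot)$ together with the padding identity $s^{(k+1)}(y,v,0)=s^{(k)}(y,v)$ imply that $\Psi_k$ is submersive above each point of its image $\Delta_k\subset Y\times Y$, so $\Delta_k$ is open and $\Delta_k\subset\Delta_{k+1}$. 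A pointwise reachability argument (for each $y_1\in Y$, the set of $y_2$ reachable from $y_1$ by iterated sprays is open by local submersion, and closed because the submersion of $\Psi_1$ at $(y_1^*,0)$ lets one reach any limit $y_1^*$ from nearby reached points), combined with the connectedness of $Y$, yields $\bigcup_k\Delta_k=Y\times Y$. Compactness of $Y\times Y$ then produces a single $K$ with $\Delta_K=Y\times Y$. Setting $N:=nK$ and $S:=s^{(K)}$, I obtain a dominating spray $S\colon Y\times\R^N\to Y$ such that for every $(y_1,y_2)\in Y\times Y$ there is $v\in\R^N$ with $S(y_1,v)=y_2$ and $S(y_1,\cdot)$ submersive at $v$.

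\emph{Phase 2 --- building $g$ and $F$.} The case $\dim Y=0$ is immediate ($Y$ is then a single point by connectedness), so assume $\dim Y\geq 1$ and hence $\dim X\geq 1$. Pick a finite collection of points $y_1,\ldots,y_m\in Y$ and Euclidean-open $V_j\ni y_j$ covering $Y$, together with pairwise distinct $x_1,\ldots,x_m\in X$. Applying Phase 1 to each pair $(f(x_j),y_j)$ supplies $v_j\in\R^N$ with $S(f(x_j),v_j)=y_j$ and $S(f(x_j),\cdot)$ submersive at $v_j$. By prescribed-jet interpolation on the nonsingular affine variety $X$, construct a regular $h\colon X\to\R^N$ with $h(x_j)=v_j$ and with $d_{x_j}h$ chosen so that the composite $d_{x_j}(x\mapsto S(f(x),h(x)))$ is surjective onto $T_{y_j}Y$; this step uses $\dim X\geq\dim Y$ to supply the needed degrees of freedom. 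Define $g(x):=S(f(x),h(x))$ and $F(x,t):=S(f(x),t\,h(x))$: both are regular, and $F(x,0)=f(x)$, $F(x,1)=g(x)$, giving \ref{th-1-2-1}. At each $x_j$, $g$ is a submersion with $g(x_j)=y_j$, so the inverse function theorem sends a Euclidean neighborhood of $x_j$ submersively onto a Euclidean neighborhood $W_j$ of $y_j$. Refining the cover $\{V_j\}$ so that $V_j\subset W_j$, one concludes $g(X)\supset\bigcup V_j=Y$, establishing surjectivity, and every $y\in Y$ is the submersive image of some $x$, giving \ref{th-1-2-2}.

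\emph{Main obstacle.} The real technical work is in Phase 2: the interpolation $h$, the resulting submersion neighborhoods $W_j$, and the cover $\{V_j\}$ must be chosen compatibly. A clean implementation is to first use compactness of $Y$ to obtain uniform lower bounds on the sizes of submersion neighborhoods associated with $S$, then fix a cover $\{V_j\}$ fine enough relative to those bounds, and only then perform the interpolation; if this does not succeed on the first try, one iterates with refined covers. Phase 1 is independent of $f$ and $X$, and is where malleability, connectedness, and compactness are distilled into the single algebraic object $S$ around which all later estimates in Phase 2 are organized; the dimension hypothesis $\dim X\geq\dim Y$ enters only in Phase 2, where it supplies exactly the freedom needed to match derivatives.
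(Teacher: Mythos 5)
Your Phase 1 is essentially sound and is a genuinely nice observation (after the small repair that submersivity at the padded point $(v,0)$ shows $\Delta_k\subset\mathrm{interior}(\Delta_{k+1})$ rather than openness of $\Delta_k$ itself); but it is not where the difficulty of the theorem lies, and the genuine gap is in Phase 2. Prescribing the $1$-jets of $h$ at the finitely many points $x_j$ only guarantees that $g=S(f(\cdot),h(\cdot))$ is a submersion at each $x_j$, hence that $g(X)$ contains \emph{some} Euclidean neighborhood $W_j$ of $y_j$. The size of $W_j$, however, is governed by the behavior of $h$ on a whole neighborhood of $x_j$ (a lower bound on the relevant singular values of $dg$ together with a bound on how fast $dg$ varies, i.e.\ second-order data of $h$), and none of this is controlled by finitely many interpolation conditions. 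Since the points $y_j$ and the cover $\{V_j\}$ are fixed before $h$ is constructed, you cannot conclude $V_j\subset W_j$; and the proposed repair --- uniform bounds ``associated with $S$'', then refine the cover and re-interpolate --- is circular: the covering radius at $x_j$ is a property of the composite $g$, not of $S$ alone, and imposing more interpolation points typically makes the uncontrolled derivatives of $h$ worse, so the iteration has no reason to terminate.

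What is missing is control of $g$ on compact sets rather than at finitely many points, and this is exactly how the paper proceeds. It first builds a $\Cinfty$ map $\varphi\colon U\to Y$, where $U$ is a disjoint union of cube-like open sets $V_i\subset X$ mapped by submersions onto chart cubes $W_i\subset Y$, so that already a compact set $K\subset U$ is mapped onto $Y$ submersively; then --- and this is the key step for which your proposal has no substitute --- it uses the spray machinery (Lemma~\ref{lem-2-1}, resting on \cite[Lemma~3.8]{bib2}) to solve $s(f(x),\eta_1(x))=\varphi(x)$ \emph{exactly}, with $\eta_1$ of class $\Cinfty$ on a neighborhood of $K$ (here connectedness of $Y$ and contractibility of the $V_i$ make $f|_U$ and $\varphi$ homotopic, which is what the lemma lifts); finally it approximates $\eta_1$ by a regular map $\beta$ in the weak $\Cinfty$ topology. $C^1$-closeness on the compact set $K$ then forces $g=s(f(\cdot),\beta(\cdot))$ to remain a submersion on $K$ with $g(K_i)\supseteq L_i'$, which yields surjectivity and property \ref{th-1-2-2}, while $F(x,t)=s(f(x),t\beta(x))$ gives \ref{th-1-2-1}. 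If you wish to keep your Phase 1 spray $S$, you would still need such a globally defined smooth lift $\eta$ (pointwise solvability of $S(f(x),v)=\varphi(x)$ does not by itself produce a smooth choice of $v=\eta(x)$) and a $\Cinfty$-approximation step in place of jet interpolation; so the homotopy-lifting-through-a-spray lemma, not the global surjectivity of $S(y,\cdot)$, is the idea your argument lacks.
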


The proof of Theorem~\ref{th-1-2} is deferred to Section~\ref{sec:2}. It is not clear whether the assumptions are optimal. Is malleability of $Y$ necessary? Is compactness of $Y$ necessary if $X$ is not compact?

If there exists a surjective regular map $\varphi \colon \R^n \to Y$, then $Y$ is connected. Moreover, for some point $u \in \R^n$ the derivative $d_u \varphi \colon T_u \R^n \to T_{\varphi(u)} Y$ is surjective, so the map $\R^n \to Y$, $x \mapsto \varphi(u+x)$ is dominating for $Y$ at $\varphi(u)$, and the variety $Y$ is dominable at $\varphi(u)$.

The following example illustrates the scope of applicability of Theorem~\ref{th-1-2}.

\begin{example}\label{ex-1-3}
Let $G$ be a linear real algebraic group, that is, a Zariski closed subgroup of the general linear group $\GL_n(\R)$, for some $n$. A \emph{$G$-space} is a real algebraic variety $Y$ on which $G$ acts, the action $G \times Y \to Y$, $(a,y) \mapsto a \cdot y$ being a regular map. We say that a $G$-space $Y$ is \emph{good} if $Y$ is nonsingular and for every point $y \in Y$ the derivative of the map $G \to Y$, $a \mapsto a \cdot y$ at the identity element of $G$ is surjective. Clearly, if $Y$ is homogeneous for $G$ (that is, $G$ acts transitively on $Y$), then $Y$ is a good $G$-space. By \cite[Proposition~2.8]{bib2}, every good $G$-space is malleable.

In particular, the unit $n$-sphere
\begin{equation*}
    \SB^n \coloneqq \{(x_0, \ldots, x_n) \in \R^{n+1} : 
    x_0^2 + \cdots + x_n^2 = 1\}
\end{equation*}
and real projective $n$-space $\PB^n(\R)$ are malleable varieties because they are homogeneous spaces for the orthogonal group $\ON(n+1) \subset \GL_{n+1}(\R)$.
\end{example}

Recall that an irreducible real algebraic variety $Y$ is called \emph{unirational} if there exists a regular map $\varphi \colon U \to Y$ from a Zariski open subset $U$ of $\R^n$, for some nonnegative integer~$n$, into $Y$ such that $\varphi(U)$ is Zariski dense in $Y$.

\begin{proposition}\label{prop-1-4}
If $Y$ is an irreducible nonsingular real algebraic variety, then the following conditions are equivalent:
\begin{conditions}
\item\label{prop-1-4-a} $Y$ is dominable at some point $y_0 \in Y$.

\item\label{prop-1-4-b} $Y$ is unirational.
\end{conditions}
\end{proposition}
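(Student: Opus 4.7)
The plan is to handle both directions by combining the submersion theorem with the notion of dimension for semialgebraic sets.

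For \ref{prop-1-4-a}$\Rightarrow$\ref{prop-1-4-b}: if $f \colon \R^n \to Y$ is dominating at $y_0$, the submersion theorem yields a Euclidean open neighborhood $V$ of $y_0$ in $Y$ contained in $f(\R^n)$. Since $Y$ is irreducible, any proper Zariski closed subset of $Y$ has strictly smaller dimension and, as $Y$ is nonsingular, empty Euclidean interior; hence $V$, and therefore $f(\R^n)$, is Zariski dense in $Y$. Taking $U = \R^n$ and $\varphi = f$ then witnesses the unirationality of $Y$.

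For \ref{prop-1-4-b}$\Rightarrow$\ref{prop-1-4-a}: fix a regular $\varphi \colon U \to Y$ with $U \subseteq \R^n$ Zariski open and $\varphi(U)$ Zariski dense in $Y$, and set $r = \max_{u \in U} \mathrm{rank}(d_u \varphi)$. The crux is to show $r = \dim Y$: if $r < \dim Y$, then by Tarski--Seidenberg and the constant rank theorem, $\varphi(U)$ is a semialgebraic subset of $Y$ of semialgebraic dimension at most $r$; since semialgebraic dimension coincides with the dimension of the Zariski closure, this contradicts $\overline{\varphi(U)}^{\mathrm{Zar}} = Y$. Fixing $u_0 \in U$ with $d_{u_0}\varphi$ surjective, I would then reparametrize: choose $C > 0$ small enough that the Euclidean ball $B(u_0, \sqrt{C}/2)$ lies in $U$ (possible because $U$ is Euclidean open at $u_0$) and define
\[
\psi \colon \R^n \to \R^n, \qquad \psi(t) = u_0 + \frac{C}{C + \norm{t}^2}\, t.
\]
The denominator is nowhere zero on $\R^n$, so $\psi$ is regular; a simple estimate gives $\psi(\R^n) \subseteq B(u_0, \sqrt{C}/2) \subseteq U$; and $d_0 \psi = \mathrm{id}$. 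Hence $f = \varphi \circ \psi \colon \R^n \to Y$ is a regular map with $f(0) = \varphi(u_0)$ and $d_0 f$ surjective, witnessing dominability of $Y$ at $\varphi(u_0)$.

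The main obstacle is the rank step above: one must extract, from the purely algebraic hypothesis of Zariski density, a single point where the differential of $\varphi$ is surjective. Passing through semialgebraic dimension sidesteps the delicate complexification issues that arise in the real setting, since real algebraic sets need not be Zariski dense in their complexifications, so one cannot directly transport generic smoothness of a complexified morphism back to $\R$-points.
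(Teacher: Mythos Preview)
Your proof is correct and follows essentially the same architecture as the paper's: for \ref{prop-1-4-a}$\Rightarrow$\ref{prop-1-4-b} use the submersion theorem to get a Euclidean-open piece of the image and hence Zariski density, and for \ref{prop-1-4-b}$\Rightarrow$\ref{prop-1-4-a} pick a point where the differential is surjective and precompose with a contracting regular self-map of $\R^n$. The paper simply asserts the existence of such a point $u$ and then uses the variant $h(x)=cx/(1+\norm{x}^2)$ after translating $u$ to the origin, whereas you supply a semialgebraic-dimension justification for that rank step; one cosmetic correction is that your $\psi(\R^n)$ actually hits the boundary sphere (equality in AM--GM at $\norm{t}^2=C$), so you need the \emph{closed} ball $\overline{B(u_0,\sqrt{C}/2)}\subset U$, which is of course equally available.
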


\begin{proof}
\ref{prop-1-4-a}$\Rightarrow$\ref{prop-1-4-b}. Let $f \colon \R^n \to Y$ be a dominating map for $Y$ at $y_0$. Since the derivative $d_0 f \colon T_0 \R^n \to T_{y_0} Y$ is surjective, there exists a Euclidean open neighborhood $N$ of $0 \in \R^n$ such that $f(N)$ is a Euclidean open neighborhood of $y_0 \in Y$. It follows that the set $f(N)$ is Zariski dense in $\R^n$, so the set $f(\R^n)$ is also Zariski dense in $Y$.

\ref{prop-1-4-b}$\Rightarrow$\ref{prop-1-4-a}. Let $U$ be a Zariski open subset of $\R^n$, and $\varphi \colon U \to Y$ a regular map with $\varphi(U)$ Zariski dense in $Y$. We can choose a point $u \in U$ such that the derivative $d_u \varphi \colon T_u U \to T_{\varphi(u)} Y$ is surjective. Using a translation in $\R^n$, we may assume that ${u = 0 \in U}$. If a constant $c > 0$ is sufficiently small, then the regular map
\begin{equation*}
    h \colon \R^n \to \R^n, \quad x \mapsto \frac{cx}{1 + \norm{x}^2}
\end{equation*}
satisfies $h(\R^n) \subset U$. Since the derivative $d_0 h$ is an isomorphism, the regular map
\begin{equation*}
    g \colon \R^n \to Y, \quad x \mapsto \varphi(h(x))
\end{equation*}
is dominating for $Y$ at the point $\varphi(0)$.
\end{proof}

This note is based on our joint paper with Bochnak \cite{bib2} and is inspired by achievements in complex geometry, primarily the paper of Forstneri\v{c} \cite{bib3}, and in a general sense the celebrated article of Gromov \cite{bib5} and the monograph of Forstneri\v{c} \cite{bib4}.

\section{Some properties of maps into malleable varieties}\label{sec:2}

The following variant of \cite[Lemma~3.8]{bib2} will play a key role.

\begin{lemma}\label{lem-2-1}
Let $X$ be a nonsingular real algebraic variety, $Y$ a malleable nonsingular real algebraic variety, $U$ an open subset of $X$, and $\Phi \colon U \times [0,1] \to Y$ a continuous map such that for every $t \in [0,1]$ the map $\Phi_t \colon U 
\to Y$, $\Phi_t(x) = \Phi(x,t)$ is of class $\Cinfty$. Let $U_0$ be an open subset of $X$ whose closure $\overline U_0$ is compact and contained in $U$. Then there exist a dominating spray $s \colon Y \times \R^n \to Y$ for $Y$ and a continuous map $\eta \colon U_0 \times [0,1] \to \R^n$ such that
\begin{inthm}
\item\label{lem-2-1-1} $\eta(x,0) = 0$ for all $x \in U_0$,

\item\label{lem-2-1-2} $s(\Phi(x,0), \eta(x,t)) = \Phi(x,t)$ for all $(x,t) \in U_0 \times [0,1]$,

\item\label{lem-2-1-3} for every $t \in [0,1]$ the map $\eta_t \colon U_0 \to \R^n$, $\eta_t(x) = \eta(x,t)$ is of class $\Cinfty$.
\end{inthm}
\end{lemma}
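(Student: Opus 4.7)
The plan is to adapt the construction of \cite[Lemma~3.8]{bib2}: decompose the homotopy $\Phi$ into finitely many short time-steps, realize each step by a local right-inverse of a dominating spray, and concatenate everything into a single vector in the domain of a sufficiently iterated spray. I would begin by fixing, via malleability, a dominating spray $s_0 \colon Y \times \R^m \to Y$ and noting that its $N$-fold iteration
$$s^{(N)}(y, v_1, \ldots, v_N) \coloneqq s_0\bigl(\ldots s_0(s_0(y, v_1), v_2)\ldots, v_N\bigr)$$
is again a regular map on $Y \times (\R^m)^N$ which is itself a dominating spray (the derivative of $s_0(y,\cdot)$ in the first slot already being surjective). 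This iteration provides the extra flexibility needed below.

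Next I would extract local right-inverses of $s_0$. The map $Y \times \R^m \to Y \times Y$, $(y, v) \mapsto (y, s_0(y, v))$, is a smooth submersion at every point of $Y \times \{0\}$, so the implicit function theorem furnishes, for each $y_0 \in Y$, an open neighborhood $V \subset Y$ of $y_0$ and a $\Cinfty$ map $\sigma_V \colon V \times V \to \R^m$ satisfying $s_0(p, \sigma_V(p, q)) = q$ and $\sigma_V(p, p) = 0$. Since $K \coloneqq \Phi(\overline{U_0} \times [0,1])$ is compact, finitely many neighborhoods $V_1, \ldots, V_r$ cover $K$. By uniform continuity of $\Phi$ on $\overline{U_0} \times [0,1]$, I would then choose a partition $0 = t_0 < t_1 < \cdots < t_N = 1$ and a finite open cover $\{X_\alpha\}_{\alpha=1}^{s}$ of $\overline{U_0}$ in $U$ such that for every $\alpha$ and $i$ the image $\Phi(X_\alpha \times [t_{i-1}, t_i])$ lies in some single $V_{j(\alpha,i)}$.

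With $s \coloneqq s^{(sN)}$, whose slots I would index by pairs $(i', \alpha')$, and with a smooth partition of unity $\{\chi_\alpha\}$ on $\overline{U_0}$ subordinate to $\{X_\alpha\}$, the map $\eta$ is defined slot-by-slot. For $t \in [t_{i-1}, t_i]$ the first $i-1$ blocks of slots are to realize the consecutive traversals from $\Phi(x, t_{i'-1})$ to $\Phi(x, t_{i'})$ for $i' < i$, the $i$-th block realizes the partial traversal from $\Phi(x, t_{i-1})$ to $\Phi(x, t)$, and the remaining slots vanish. Inside each block the sections $\sigma_{V_{j(\alpha',i')}}$ attached to the different charts are combined via the compositional trick afforded by the iterated spray: if $\sigma, \sigma'$ are local sections and $\chi$ is a cutoff, then the pair $(\chi\sigma(p,q),\, \sigma'(s_0(p,\chi\sigma(p,q)),q))$ is a section of $s_0^{(2)}$ wherever both are defined. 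Iterating this trick across the $s$ charts produces a vector built from $\chi_\alpha$-weighted contributions that nonetheless satisfies the exact equation $s(\Phi(x,0), \eta(x,t)) = \Phi(x,t)$.

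The main obstacle is this very patching step: local right-inverses of $s_0$ cannot be combined by naive partition-of-unity averaging because the equation $s_0(p,v) = q$ is nonlinear in $v$. The iterated spray resolves the difficulty by letting one glue sections through successive corrections rather than through linear interpolation. Once the construction is in place, the required properties---continuity of $\eta$ on $U_0 \times [0,1]$, the $\Cinfty$-smoothness of each $\eta_t$, the vanishing $\eta(x,0)=0$, and the identity $s(\Phi(x,0), \eta(x,t)) = \Phi(x,t)$---follow from the smoothness of the $\sigma_V$, of the $\chi_\alpha$, and from the fact that the blocks of slots have been designed so that these identities hold telescopically.
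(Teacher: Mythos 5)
Your overall architecture (subdivide time, iterate the spray, invert $s_0$ locally by the implicit function theorem) is reasonable, but the step you yourself call the main obstacle --- the patching --- is not actually resolved, and condition \ref{lem-2-1-2}, the exact identity, is precisely where your formula breaks. In your two-section trick the second section is applied \emph{without} a cutoff; iterating it across the $s$ charts therefore forces the last chart's section to be applied in full at the pair (wandering intermediate point, $\Phi(x,t)$) for \emph{every} $x\in U_0$, which is impossible: $\sigma_{V_{j(s,i)}}$ is only defined on $V_{j(s,i)}\times V_{j(s,i)}$, and for $x\notin X_s$ the target $\Phi(x,t)$ need not lie in that chart, nor need the intermediate point $s_0(p,\chi\sigma(p,q))$ lie in any prescribed chart (it leaves the chart as soon as $\sigma(p,q)$ is not small). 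If instead every slot carries a cutoff --- which is what ``$\chi_\alpha$-weighted contributions'' with $\sum_\alpha\chi_\alpha=1$ suggests --- then at a point where, say, two weights equal $1/2$ the chain of partial spray moves ends strictly short of $\Phi(x,t)$, so the exact equation fails; an approximate version is useless, since the exactness of \ref{lem-2-1-2} at $t=1$ is exactly what the proof of Theorem \ref{th-1-2} transfers to the regular map. (A smaller point: the implicit function theorem alone does not give $\sigma_V(p,p)=0$; you need to restrict $s_0(y,\cdot)$ to a subbundle complementary to $\ker d_0s_0(y,\cdot)$ and invert that.)

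For comparison, the paper does not reprove this at all: it views $\Phi$ as a homotopy of $\Cinfty$ sections of the projection $X\times Y\to X$, notes via \cite[Lemma~4.1]{bib2} that $\tilde s((x,y),v)=s(y,v)$ is a dominating spray for that projection, and quotes \cite[Lemma~3.8]{bib2}, which is where the work you are attempting lives. If you want a self-contained argument along your lines, two repairs close the gap. (i) Drop the partition of unity: since $\ker d_0s_0(y,\cdot)$ has constant rank, restricting $s_0$ to a complementary subbundle and inverting it near the zero section yields a single smooth $\sigma(p,q)$ with $s_0(p,\sigma(p,q))=q$ and $\sigma(p,p)=0$ on a neighborhood of the diagonal of $Y\times Y$; choose the subdivision so fine that $(\Phi(x,t_{i-1}),\Phi(x,t))$ stays in this neighborhood for $x\in\overline U_0$, $t\in[t_{i-1},t_i]$, and telescope with the $N$-fold iterated spray; conditions \ref{lem-2-1-1}--\ref{lem-2-1-3} then follow at once. (ii) Alternatively keep the charts, but let the cutoffs reparametrize time rather than rescale vectors: with $\psi_\alpha=\chi_1+\cdots+\chi_\alpha$, let slot $\alpha$ of block $i$ carry $\Phi\bigl(x,t_{i-1}+\psi_{\alpha-1}(x)(t-t_{i-1})\bigr)$ to $\Phi\bigl(x,t_{i-1}+\psi_{\alpha}(x)(t-t_{i-1})\bigr)$ via $\sigma_{V_{j(\alpha,i)}}$; each slot is then an exact section between two genuine points of the homotopy lying in one chart, and since $\psi_s\equiv 1$ on $U_0$ each block ends exactly at $\Phi(x,t)$.
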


\begin{proof}
Choose a dominating spray $s 
\colon Y \times \R^n \to Y$ for $Y$, as in Definition~\ref{def-1-1}\ref{def-1-1-ii}. Let $\tilde p \colon \tilde E \coloneqq (X \times Y) \times \R^n \to X \times Y$ be the product vector bundle and let $\tilde s \colon 
\tilde E \to X \times Y$ be defined by $\tilde s((x,y), v) = s(y,v)$. According to \cite[Lemma~4.1 and its proof]{bib2}, the triple $(\tilde E, \tilde p, \tilde s)$ is a dominating spray for the canonical projection $h \colon X \times Y \to X$, as in \cite[Definition~3.2(ii)]{bib2}.

Note that the continuous map
\begin{equation*}
    F \colon U \times [0,1] \to X \times Y, \quad F(x,t) = (x,(\Phi(x,t))
\end{equation*}
is a homotopy of $\Cinfty$ sections of $h \colon X \times Y \to X$, that is, for every $t \in [0,1]$ the $\Cinfty$ map $F_t \colon U \to X \times Y$, $F_t(x) = F(x,t)$ satisfies $h(F_t(x))=x$. Thus, by \cite[Lemma~3.8]{bib2}, the dominating spray $s \colon Y \times \R^n \to Y$ above can be chosen in such a way that there exists a continuous map $\eta \colon U_0 \times [0,1] \to \R^n$ satisfying \ref{lem-2-1-1} and \ref{lem-2-1-3}, and
\begin{equation*}
    \tilde s(F(x,0), \eta(x,t)) = F(x,t) \quad
    \text{for all } (x,t) \in U_0 \times [0,1].
\end{equation*}
The last equality implies directly that \ref{lem-2-1-2} holds.
\end{proof}

We are now ready to carry out our main task.

\begin{proof}[Proof of Theorem~\ref{th-1-2}]
The variety $Y$ is pure dimensional because it is nonsingular and connected. We may assume without loss of generality that the variety $X$ is pure dimensional. Set $n \coloneqq \dim X$ and $p \coloneqq \dim Y$.

Since $Y$ is compact, there exists a finite open cover $\{W_1, \ldots, W_r\}$ of $Y$, where each $W_i$ is $\Cinfty$ diffeomorphic to the open cube $(0,1)^p$ in $\R^p$. Moreover, there exist two collections $\{L_1, \ldots, L_r\}$ and $\{L'_1, \ldots, L'_r\}$ of compact subsets of $Y$ such that
\begin{equation*}
    Y = L'_1 \cup \cdots \cup L'_r
    \quad \text{and} \quad
    L'_i \subset \mathrm{interior}(L_i) \subset W_i \quad \text{for } 1 \leq i \leq r.
\end{equation*}
Let $\{V_1, \ldots, V_r\}$ be a collection of pairwise disjoint open subsets of $X$, where each $V_i$ is $\Cinfty$~diffeomorphic to the open cube $(0,1)^n$ in $\R^n$. We can choose a surjective $\Cinfty$ submersion $\varphi_i \colon V_i \to W_i$ and a compact subset $K_i$ of $V_i$ such that $\varphi_i(K_i) = L_i$, $1 \leq i \leq r$. Note that the map
\begin{equation*}
    \varphi \colon U \coloneqq V_1 \cup \cdots \cup V_r \to Y, \quad \varphi(x)=\varphi_i(x) \quad \text{for } 
    x \in V_i,\ 1 \leq i \leq r
\end{equation*}
is a $\Cinfty$ submersion satisfying $\varphi(K) =Y$, where $K \coloneqq K_1 \cup \cdots \cup K_r$.

As a side note, consider a continuous map $h \colon U \to Y$. Since each set $V_i$ is contractible, the restriction $h|_{V_i} \colon V_i \to Y$ is null homotopic. Consequently, the map $h \colon U \to Y$ is null homotopic because $Y$ is path connected.

It follows from the side note above that the $\Cinfty$ maps $f|_U$, $\varphi \colon U \to Y$ are homotopic. Thus there exists a $\Cinfty$ map $\Phi \colon U \times [0,1] \to Y$ such that $\Phi(x,0) = f(x)$ and $\Phi(x,1) = \varphi(x)$ for all $x \in U$, see for example \cite[Proposition~10.22]{bib7}. Choose an open subset $U_0$ of $X$ such that $K \subset U_0$ and the closure $\overline U_0$ is a compact subset of $U$. Let $s \colon Y \times \R^n \to Y$ be a dominating spray for $Y$ and let $\eta \colon U_0 \times [0,1] \to \R^n$ be a continuous map satisfying conditions \ref{lem-2-1-1}--\ref{lem-2-1-3} in Lemma~\ref{lem-2-1}. In particular,
\begin{equation*}
    s(f(x), \eta_1(x)) =
    s(\Phi(x,0), \eta_1(x,0)) = \Phi(x,1) = \varphi(x) \quad \text{for all } x \in U_0.
\end{equation*}
By the Stone--Weierstrass theorem, we can choose a regular map $\beta \colon X \to \R^n$ whose restriction $\beta|_{U_0}$ is arbitrarily close to $\eta_1$ in the weak $\Cinfty$ topology (defined in \cite[p.~36]{bib6}). The map
\begin{equation*}
    g \colon X \to Y, \quad
    g(x) = s(f(x), \beta(x))
\end{equation*}
is regular, and its restriction $g|_{U_0}$ is as close to $\varphi|_{U_0}$ in the weak $\Cinfty$ topology as we want if $\beta|_{U_0}$ is sufficiently close to $\eta_1$. We may assume that $g(K_i) \supseteq L'_i$ for $1 \leq i \leq r$, so $g(K) = Y$, and for every point $x \in K$ the derivative $d_x g \colon T_x X \to T_{g(x)} Y$ is surjective. In summary, $g \colon X \to Y$ is a surjective regular map satisfying condition \ref{th-1-2-2}. Moreover,
\begin{equation*}
    F \colon X \times \R \to Y, \quad
    F(x,t) = s(f(x), t\beta(x))
\end{equation*}
is a regular map satisfying condition \ref{th-1-2-1}.
\end{proof}

\begin{acknowledgements}
The author was partially supported by the National Science Center (Poland) under grant number 2018/31/B/ST1/01059.
\end{acknowledgements}

\phantomsection
\addcontentsline{toc}{section}{\refname}

\end{document}